\documentclass[11 pt]{amsart}
\usepackage{amsmath,amssymb,verbatim, enumerate}
\usepackage{tikz}

\usepackage{color}

\usepackage[enableskew]{youngtab}
\newcommand{\ds}{\displaystyle}
\newcommand{\mA}{\mathcal A}
\def\l{\lambda}
\def\m{\mu}
\def\n{\nu}

\def\o{\bar 1}
\def\t{\bar 2}
\def\th{\bar 3}
\def\f{\bar 4}

\def\cT{T^{\prec}}
\def\tcT{T^b}
\newcommand{\bT}{\mathbb T}
\definecolor{red}{rgb}{1,0,0}
\definecolor{blue}{rgb}{.2,.2,.8}

\newtheorem{theorem}{Theorem}[section]
\newtheorem{proposition}[theorem]{Proposition}

\newtheorem{corollary}[theorem]{Corollary}

\author{Cristina Ballantine
 \and Bill Hallahan
  }
 \title[Stability of Kronecker Coefficients]{Stability of coefficients in the Kronecker product of a hook and a rectangle}
\address{ Department of Mathematics and Computer Science, College of the Holy Cross, Worcester, MA, USA} 

\address{ cballant@holycross.edu }

\address{ wthall15@g.holycross.edu}
\keywords{Schur functions, Kronecker coefficients, stability}

\begin{document}

\begin{abstract}

We use  recent work of Jonah Blasiak (2012) to prove a stability result for the coefficients in the Kronecker product of two Schur functions: one indexed by a hook partition and one indexed by a rectangle partition. We also give bounds for the size of the partition starting with  which the Kronecker coefficients are stable. Moreover, we show that once the bound is reached, no new   Schur functions appear in the decomposition of Kronecker product, thus allowing one to recover the decomposition from the smallest case in which the stability holds. 

 \end{abstract}
 \maketitle
\section{Introduction}
\label{sec:in}

Let $\chi^{\l}$ and $\chi^{\m}$ be the irreducible characters of  the
symmetric group on $n$ letters, $S_n$,  indexed by the partitions $\l$ and $\m$ of $n$. The
\emph{Kronecker product} $\chi^{\l}\chi^{\m}$ is defined by
$(\chi^{\l}\chi^{\m})(w)=\chi^{\l}(w)\chi^{\m}(w)$ for all $w\in S_n$. Thus,
$\chi^{\l}\chi^{\m}$ is the character that corresponds to the diagonal action of
$S_n$ on the tensor product of the irreducible representations indexed by $\l$ and
$\m$. Then, we have
$$\chi^{\l}\chi^{\m} =\sum_{\nu\vdash n}g(\l,\m,\n)\chi^{\nu},$$
where $g(\l,\m,\n)$ is the multiplicity of $\chi^{\nu}$ in $\chi^{\l}\chi^{\m}$.
Hence,  the numbers $g(\l,\m,\n)$ are non-negative integers. \medskip

 By means of the
Frobenius map one can define the Kronecker (internal) product on the Schur symmetric
functions by
$$s_{\l}\ast s_{\m}=\sum_{\n\vdash n} g(\l,\m,\n) s_{\n}.$$
A reasonable formula for decomposing the Kronecker product is unavailable, although
the problem has been studied since the early twentieth century. Some results exist in particular cases.
Lascoux \cite{la}, Remmel \cite{r}, Remmel and Whitehead \cite{rw} and Rosas
\cite{ro} derived closed formulas for  Kronecker products of Schur functions indexed
by two row shapes or hook shapes.  Dvir \cite{d} and Clausen and Meier
\cite{cm} have given for any $\lambda$ and $\mu$  a simple and precise description
for the maximum length of $\nu$ and the maximum size of $\nu_1$ whenever
$g(\l,\m,\n)$ is nonzero.  Bessenrodt and Kleshchev \cite{bk} have looked at the
problem of determining when the decomposition of the Kronecker product has one or
two constituents. Similarly,  positive combinatorial interpretations of the Kronecker coefficients $g(\l,\m,\n)$ exist only in particular cases: (i) if $\l$ and $\m$ are both  hooks \cite{r}; (ii) if $\l$ is a two row partition (with some conditions on the size of the first part)  \cite{bo-slc}; (iii) if $\lambda$ is a hook partition \cite{blasiak}; (iv) if $\l$ and $\m$ are both two row partition \cite{bms}. Recent years have seen a resurgence of the study of the Kronecker product motivated by application to geometric complexity theory and quantum information theory.  In the first seminal paper on the Kronecker product, Murnaghan \cite{m} observed the following stability property. Given  $\bar{\l}, \bar{\mu}$, and $\bar{\nu}$  partitions of $a,b$, and $c$ respectively, define $\l(n):=(n-a, \bar{\l})$, $\m(n):=(n-b, \bar{\m})$, and $\n(n):=(n-c,\bar{\n})$. Then, the Kronecker coefficient $g(\l(n),\m(n),\n(n))$ does not depend on $n$ for $n$ larger than some integer $N=N(\bar{\l}, \bar{\m},\bar{\n})$. We say that the Kronecker coefficients $g((n-a, \bar{\l}),(n-b, \bar{\m}),(n-c, \bar{\n}))$ \textit{stabilize} for $n \geq N$. Proofs of this stability property and lower bounds for $N$ were given by Brion \cite{brion} using algebraic geometry and Vallejo \cite{v1, v2} using combinatorics of the Young tableaux.  More generalized stability notions make sense. Pak and Panova recently proved  k-stability for Kronecker coefficients \cite{p-p}. Generalized stability has also been recently studied by Stembridge \cite{stem}.\medskip

In this article, using Blasiak's work \cite{blasiak}, we investigate the stability of the Kronecker coefficients $g(\l,\m,\n)$ when   $\l=(m^t)$ is a rectangle partition of  $n=mt$ and $\m=(n-d,1^d)$ is a hook partition of $n$. Specifically, if $\tilde{\n}^{(m)}$ is the partition obtained from $\n$ by adding a row of length $m$ and reordering the parts to form a partition, then, whenever $t\geq d+2$, we have $$g((m^t),(n-d,1^d),\nu)=g((m^{t+1}),(n-d+m,1^d),\tilde{\nu}^{(m)}).$$

Moreover, if $t\geq d+2$, all Schur functions appearing in the decomposition of $$s_{(n-d+m,1^d)}\ast s_{(m^{t+1})}$$
are of the form $\ds s_{\tilde{\nu}^{(m)}}$ for a partition $\nu$ such that $\ds s_{\nu}$ appears in the decomposition of $$s_{(n-d,1^d)}\ast s_{(m^{t})}.$$ Thus, if $n=m(d+2)$, one can completely recover the decomposition of the Kronecker product $$s_{(n-d+km,1^d)}\ast s_{(m^{d+2+k})}$$ from the decomposition of Kronecker product $$s_{(n-d,1^d)}\ast s_{(m^{d+2})}.$$

Our study of the particular case of the Kronecker product of a hook shape and a rectangular shape is motivated by its usefulness for the understanding of the quantum Hall effect \cite{stw}.


\section{Preliminaries and Notation} \label{sec:prelim}
In this section we set the notation and introduce some basic background about partitions and Schur functions, mostly following \cite{bo}. For details and proofs of the contents of this section see \cite{ma} or
\cite[Chapter 7]{st}.  Let $n$ be a non-negative integer. A \emph{partition} of  $n$
is a weakly decreasing sequence of non-negative integers,
$\l:=(\l_1,\l_2,\cdots,\l_{\ell})$, such that $|\l|=\sum \l_i=n$. We write $\l\vdash
n$ to mean $\l$ is a partition of $n$. The nonzero integers $\lambda_i$ are called
the \emph{parts} of $\l$. We identify a partition with its \emph{Young diagram},
i.e. the array of left-justified squares (boxes) with $\l_1$ boxes in the first row,
$\l_2$ boxes in the second row, and so on. The rows are arranged in matrix form from
top to bottom. By the box in position $(i,j)$ we mean the box  in the $i$-th row and
$j$-th column of $\l$. The \textit{size}  of $\l$ is $|\l|=\sum \l_i$. The \emph{length} of $\l$, $\ell(\l)$, is the number of rows
in the Young diagram. Given a partition $\l$, its \textit{conjugate} is the partition $\l'$ whose Young diagram has rows precisely the columns of $\l$. 

\begin{center}
\ \ \ \ \ { \yng(6,4,2,1,1)}\hspace*{3.5cm} { \yng(5,3,2,2,1,1)}

$\l=(6,4,2,1,1), \ \ \ \ell(\l)=5, \ \ \ |\l|=14$ \mbox{ \ \ and \ \  } $\l'=(5,3,2,2,1,1)$

\vskip 0.1in

 Fig. 1
\end{center}
Given two partitions $\l$ and $\m$, we write $\m\subseteq \l$ if and only if
$\ell(\m) \leq \ell(\l)$ and $\l_i\geq \m_i$ for $1\leq i\leq \ell(\m)$. If $\m
\subseteq \l$, we denote by $\l/\m$ the skew shape obtained by removing the boxes
corresponding to $\m$ from $\l$.

\begin{center}
{ \young(:::\hfil\hfil\hfil,:\hfil\hfil\hfil,:\hfil,\hfil,\hfil)} \vskip
0in $\l/\m$ where $ \l=(6,4,2,1,1)$  and  $\m=(3,1,1)$ \vskip 0.1in
 Fig.2
\end{center}
\medskip

A \emph{semi-standard Young tableau} (SSYT) \emph{of shape} $\l/\m$ is a filling of
the boxes of the skew shape $\l/\m$ with positive integers so that the numbers
weakly increase in each row from left to right and strictly increase in each column
from top to bottom. The \emph{type} of a SSYT $T$ is the sequence of non-negative
integers $(t_1,t_2,\ldots)$, where $t_i$ is the number of $i$'s in $T$.

\begin{center}
{ \young(:::2234,::1446,:1366,224)} \\is a SSYT of shape
$\l/\m=(7,6,5,3)/(3,2,1)$ and type $(2,4,2,4,0,3)$. \\  \vskip 0.1in Fig. 3
\end{center}
\vskip 0in Given a SSYT $T$ of shape $\l/\m$ and type $(t_1,t_2,\ldots)$, we define
its \emph{weight}, $w(T)$, to be the monomial obtained by replacing each $i$ in $T$
by $x_i$ and taking the product over all boxes, i.e.
$w(T)=x_1^{t_1}x_2^{t_2}\cdots$. For example, the weight of the SSYT in Fig. 3 is
$x_1^2x_2^4x_3^2x_4^4x_6^3$. The skew Schur function $s_{\l/\m}$ is defined
combinatorially by the formal power series
$$s_{\l/\m}= \sum_T w(T),$$
where the sum runs over all SSYTs of shape $\l/\m$. To obtain the usual Schur
function one sets $\m =\emptyset$.\medskip

 The space of homogeneous symmetric
functions of degree $n$ is denoted by $\Lambda^n$. A basis for this space is given
by the Schur functions $\{ s_\l\,|\, \lambda\vdash n\}$. The Hall inner product on
$\Lambda^n$ is denoted by $\langle \ , \ \rangle_{\Lambda^n}$ and it is defined by
$$\langle s_{\l},s_\m\rangle_{\Lambda^n}=\delta_{\l\m},$$
 where  $\delta_{\l\m}$  denotes the Kronecker delta.\medskip
 
  For a
positive integer $r$, let $p_r=x_1^r+x_2^r+\cdots $. Then $p_{\m}=p_{\m_1}p_{\m_2}
\cdots p_{\m_\ell(\m)}$ is the power symmetric function corresponding to the
partition $\m$ of $n$. If $CS_n$ denotes the space of class functions of $S_n$, then
the \emph{Frobenius characteristic map} $F: CS_n\rightarrow \Lambda^n$  is defined
by
$$F(\sigma)= \sum_{\m\vdash n} z^{-1}_{\m}\sigma(\m)p_{\m},$$
where $z_\m = 1^{m_1} \, m_1!\, 2^{m_2} \, m_2! \cdots n^{m_n} \, m_n!$ if
${\displaystyle \m=(1^{m_1}, 2^{m_2}, \ldots, n^{m_n})}$, i.e. $k$ is repeated $m_k$
times in $\m$, and $\sigma(\m)=\sigma(\omega)$ for an $\omega \in S_n$ of cycle type
$\m$. Note that $F$ is an isometry. If $\chi^\l$ is an irreducible character of
$S_n$ then, by the Murnaghan-Nakayama rule \cite[7.17.5]{st}, $F(\chi^\l)=s_\l$. \medskip

We define the Kronecker  product of  Schur 
functions by $$s_{\l}\ast s_{\m}=\sum_{\n\vdash n} g(\l,\m,\n) s_{\n},$$ where $g(\l,\m,\n)$ is the multiplicity of $\chi^{\nu}$ in $\chi^{\l}\chi^{\m}$.
 \medskip

  A \emph{lattice permutation} is a
sequence $a_1a_2\cdots a_n$ such that in any initial factor $a_1a_2\cdots a_j$, the
number of $i$'s is at least as great as the number of $(i+1)$'s for all $i$. For
example $11122321$ is a lattice permutation. The \emph{reverse reading word} of a
tableau is the sequence of entries of $T$ obtained by reading the entries from right
to left and top to bottom, starting with the first row.
\medskip

 \noindent {\bf
Example:} The reverse reading word of the tableau below  is $218653974$. 

\begin{center}
\Yvcentermath1$\young(::12,3568,479)$\\ \ \\ Fig. 4\end{center}


\section{Blasiak's Combinatorial Rule} \label{sec:blasiak}

In this section, following \cite{blasiak}, we give a brief description of the combinatorial interpretation of the Kronecker coefficients when one of the Schur functions is indexed by a hook shape. All partitions in this section are of size $n$. We write $\mu(d)$ for the partition $(n-d,d)$. First we introduce the necessary notation. \medskip

A \textit{word} is a sequence of letters from some totally ordered set called an \textit{alphabet}. The set $\{1, 2, \ldots\}$ is called the \textit{alphabet of unbarred (or ordinary) letters} and the set $\{\bar 1,\bar 2, \ldots\}$ is called the \textit{alphabet of barred  letters}. A colored \textit{word} is a word in the alphabet $\mA=\{1, 2, \ldots\}\cup \{\bar 1,\bar 2, \ldots\}$. We will need two orders on $\mA$. 

\begin{center}\textit{the natural order:} $\bar 1<1<\bar2<2<\cdots$ \end{center}

\begin{center}\textit{the small bar order:} $\bar 1\prec\bar2\prec\cdots \prec 1 \prec2\prec\cdots$ \end{center}\medskip

A \textit{semistandard colored tableau} for any of the above orders on $\mA$ is a tableau with entries in $\mA$ such that: (i) unbarred letters increase weakly from left to right in each row and strictly from top to bottom in each column; (ii) barred letters increase strictly from left to right in each row and weakly from top to bottom in each column. \medskip

Given a semistandard colored tableau for any one of the above orders, one can convert it to a semistandard colored tableau for the other order using Jeu-de-Taquin moves. Since the description of the moves is rather involved, we use Example 2.17 in \cite{blasiak} to show the conversion of a semistandard colored tableau for the small bar order to a semistandard colored tableau for the natural bar order. Converting from the natural order to the small bar ordered is obtained by simply reversing the steps.\medskip 

\noindent \young(\o\t\th1,\o\th\f2,\t113,124,35) \hspace*{-.06in} $\leftrightarrow$ \hspace*{-.06in} \young(\o\t\th1,\o\th12,\t13\f,124,35) \hspace*{-.06in}$\leftrightarrow$  \hspace*{-.06in} \young(\o\t\th1,\o112,\t23\f,1\th4,35) \hspace*{-.06in}$\leftrightarrow$  \hspace*{-.06in} \young(\o\t11,\o12\th,\t23\f,1\th4,35) \ \hspace*{-.06in}$\leftrightarrow$  \hspace*{-.06in} \young(\o\t11,\o12\th,123\f,\t\th4,35)  \hspace*{-.06in}$\leftrightarrow$  \hspace*{-.06in}\young(\o111,\o\t2\th,123\f,\t\th4,35)
\begin{center}Fig. 5\\ Blasiak's example 2.17 \end{center}
\bigskip

In step 1, perform Jeu-de-Taquin on $\f$; in step 2, perform Jeu-de-Taquin on the lower $\th$; and so on. \medskip

The \textit{content} of a colored tableau $T$ is $c=(c_1, c_2, \ldots)$, where $c_i$ is the number of $i$ and $\bar{i}$ in $T$. The \textit{total color} of $T$ is the number of barred letters in $T$.\medskip

Let $T^<$ be a colored tableau for natural order $<$ and let $\cT$ be the tableau obtained from $T^<$ by converting to the small bar order $\prec$. Let $\tcT$ be the tableau of barred letters in $\cT$. Denote by $\bT$ the tableau obtained by placing $\cT/\tcT$ above and to the right of $(\tcT)'$ so that the NW corner of $(\tcT)'$ touches the SE corner of $\cT/\tcT$, and removing the bars from the letters of $(\tcT)'$. \medskip

In Figure 6, we show an example of the tableaux $T^<, \cT, \tcT$, and $\bT$, where $T^<$ is the tableau on the right in the example above. 
\begin{center}

 \young(\o111,\o\t2\th,123\f,\t\th4,35) \hspace{1cm} \young(\o\t\th1,\o\th\f2,\t113,124,35) \hspace{1cm}  \young(\o\t\th,\o\th\f,\t)\hspace{1cm} \young(::::::1,::::::2,::::113,:::124,:::35,112,23,34)\vspace{.1in}

\hspace{-1.5cm} $T^<$ \hspace{2.2cm} $\cT$ \hspace{2.2cm} $\tcT$ \hspace{2.2cm} $\bT$\\ \vskip 0.1in

Fig. 6
\end{center}

\medskip

In the rule below, we refer to a semistandard Young tableau for the natural order $<$ as a \textit{colored tableau}. 
A colored tableau $T^<$ is called \textit{Yamanouchi} if the reverse reading word of $\mathbb T$ is a lattice permutation. \medskip

\noindent \textbf{Blasiak's combinatorial rule:} The Kronecker coefficient $g(\l, \m(d),\nu)$ equals the number of Yamanouchi  colored tableaux of shape $\nu$,  content $\l$, and total color $d$, in which the box in the SW corner has an unbarred letter. 

\section{On the Kronecker Product of a Hook and a Rectangle} \label{sec:properties}

In this section we collect several properties of the partitions indexing Schur functions that appear in the Kronecker Product of the Schur function associated with a hook partition and the Schur function associated with a rectangular partition. For the remainder of the article, $T^<$ will denote a semistandard Young tableau for the natural order $<$ and $\cT, \tcT, \bT$ will denote the tableaux associated with $T^<$ defined in the previous section.\medskip

In the next theorem, we prove an important property of tableaux counted by Blasiak's rule when $\l$ is a rectangle partition $(m^t)$. For this proposition, the fact that the SW corner of the tableau is unbarred is irrelevant. \medskip

\begin{theorem} \label{determined} Given a Yamanouchi colored tableaux $T^<$ of shape $\nu$, content $(m^t)$, and total color $d$, the shape of $\tcT$ completely determines the tableau $\tcT$. 

\end{theorem}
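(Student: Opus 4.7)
The plan is to exhibit an explicit formula for the filling of $\tcT$ determined by its shape, and then argue that no other filling can extend to a Yamanouchi colored tableau $T^<$ with the prescribed parameters. Small computations suggest that the entry of $\tcT$ at position $(i,j)$ must equal $\overline{t-\sigma_i+j}$, where $\sigma=(\sigma_1,\sigma_2,\ldots)$ is the shape of $\tcT$. Equivalently, row $i$ of $\tcT$ consists of the $\sigma_i$ largest barred letters $\overline{t-\sigma_i+1},\overline{t-\sigma_i+2},\ldots,\overline{t}$. This filling is a valid colored tableau for the small bar order (strict in rows because consecutive integers are placed, weak in columns because $\sigma_i$ is weakly decreasing in $i$) and depends only on $\sigma$ and $t$, so proving its uniqueness establishes the theorem.

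I would first observe that the reverse reading word of $\cT/\tcT$ is an initial segment of the lattice reverse reading word of $\bT$ and hence is itself a lattice permutation. Writing $d_k$ for the number of $\overline{k}$'s in $\tcT$, this forces the content $(m-d_1,\ldots,m-d_t)$ of $\cT/\tcT$ to be a partition, so $d_1\le d_2\le\cdots\le d_t$. The proposed formula predicts $d_k=\sigma'_{t-k+1}$, which is automatically weakly increasing. Once the content is fixed, the row-strict, column-weak filling of shape $\sigma$ with this content is unique, as the corresponding Kostka number $K_{\sigma',(d_1,\ldots,d_t)}$ equals $1$ (one checks this directly by placing the largest letters first).

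The remaining step, which is the heart of the argument, is to show that the content $(d_k)$ must in fact be $(\sigma'_{t-k+1})$ rather than some other weakly increasing choice. Here one uses the full Yamanouchi condition on $\bT$: the portion of the reverse reading word of $\bT$ contributed by $(\tcT)'$ reads $\tcT$ column by column from bottom to top, and must carry the cumulative count from the partition $(m-d_1,\ldots,m-d_t)$ up to $(m^t)$ while preserving the partition property at every step. The rectangular target $(m^t)$ is essential: every coordinate must end at exactly $m$, which tightens the budget of increments. A careful induction---e.g., reading the reverse word backwards from the known final state $(m^t)$ and recovering each column of $\tcT$ in turn from rightmost to leftmost---should force the entries and hence the content.

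The main obstacle is making this forcing argument rigorous: any alternative weakly increasing content corresponds to a different saturated chain in Young's lattice from $(m-d_1,\ldots,m-d_t)$ to $(m^t)$, and one must show that no such alternative chain decomposes as the column-by-column reverse reading of a shape-$\sigma'$ tableau $(\tcT)'$ with the prescribed row lengths $\sigma'_r$. Tracking cumulative counts and exploiting the strict inequality that must hold between the counts of $i-1$ and $i$ before a new $i$ can be read is the technical crux; the rigidity of the rectangular content is what ultimately rules out any placement of a smaller barred letter in place of the one dictated by the formula.
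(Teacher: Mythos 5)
Your strategy is essentially the paper's: pass to $\bT$, exploit that it is an ordinary SSYT of rectangular type $(m^t)$ whose reverse reading word is a lattice permutation, and conclude that position $(i,j)$ of $\tcT$ carries $\overline{t-\eta_i+j}$, where $\eta$ (your $\sigma$) is the shape of $\tcT$; this is the correct filling. Your preliminary reductions are also sound: the content of $\cT/\tcT$ is a partition because its reverse reading word is an initial segment of that of $\bT$, and once the content of $\tcT$ is pinned down to $d_k=\eta'_{t-k+1}$, the row-strict, column-weak filling of shape $\eta$ with that content is unique because the relevant count is $K_{\eta'\eta'}=1$.

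The problem is that the step you yourself label ``the heart of the argument'' and ``the technical crux''---forcing $d_k=\eta'_{t-k+1}$ rather than some other weakly increasing sequence---is precisely where all the content of the theorem sits, and you do not carry it out: you only assert that reading the word backwards ``should force'' the entries. As written this is a genuine gap. It is closed by the short bottom-up induction on the rows of $\bT$ that the paper uses. Since every letter occurs exactly $m$ times, the final letter of the reverse reading word must be $t$ (if it were $k<t$, the penultimate prefix would contain $m-1$ copies of $k$ but $m$ copies of $k+1$), so the bottom row of $\bT$, of length $\eta'_{\eta_1}$, consists entirely of $t$'s. For the row above it: its leftmost entry $b$ lies over a $t$, so $b\le t-1$, and it is read after every other letter outside the bottom row; just before it is read the count of $b$ is $m-1$ while the count of $b+1$ is $m$ whenever $b+1\le t-1$, forcing $b=t-1$, so the row contains only $t-1$'s and $t$'s. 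Column-strictness over the $\eta'_{\eta_1}$ bottom $t$'s gives at least $\eta'_{\eta_1}$ copies of $t-1$ there, and if there were $q$ copies, the prefix read just before reaching them has $m-q$ copies of $t-1$ but $m-\eta'_{\eta_1}$ copies of $t$, forcing $q\le\eta'_{\eta_1}$. Iterating up the rows of $(\tcT)'$ determines every row, hence the content and, by your Kostka argument, the filling of $\tcT$. With this induction supplied your proof is complete and coincides with the paper's.
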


\begin{proof} By the definition of a semistandard colored tableau, $\bT$ must be an ordinary SSYT of type $(m^t)$. Moreover, its reverse reading word is a lattice permutation. Let $\eta$ be the shape of $\tcT$. In $\bT$, the shape of $(\tcT)'$ is $\eta'$.  Since there are exactly $m$  of each letter appearing in $\bT$, the reverse reading word of $\bT$ must end in $t$,  the highest available letter, and therefore the last row of $\bT$, which has length $\eta'_{\eta_1}$ is filled with $t$. Since the type of $\bT$ is a rectangle, the second to last row in $\bT$ must start with $t-1$ and thus is can   only be filled with the  letters $t-1$ and $t$. By the semistandard condition, the first $\eta'_{\eta_1}$ entries in this row must be $t-1$. Moreover, by the lattice permutation condition and the fact that the type is a rectangle, there cannot be more than  $\eta'_{\eta_1}$ letters $t-1$ in the second to last row in $\bT$. Continuing in this way, we see that  the content of $\tcT$ is determined. In fact, each row of $\tcT$ is filled in decreasing order from right to left with the letters $\bar t, \overline{t-1}, \overline{t-2}, \ldots$. 
\end{proof}

The next observation is  an immediate consequence of Theorem \ref{determined}. 

\begin{corollary} \label{cor-determined} Let $T^<$ be  a Yamanouchi colored tableaux  of shape $\nu$, content $(m^t)$, and total color $d$ and let $\eta$ be the shape of $\tcT$. The barred letters in $\tcT$ are precisely $\bar t, \overline{t-1}, \ldots, \overline{t-\eta_1+1}$. Moreover, the partition obtained by reading the parts of the content of $\tcT$ (i.e., of only the barred letters) in reverse order is precisely $\eta'$. 

\end{corollary}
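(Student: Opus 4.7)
My plan is to use Theorem \ref{determined} essentially as a black box and just read off the two claimed statements from its conclusion.

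First, recall from Theorem \ref{determined} that each row $i$ of $\tcT$, which has length $\eta_i$, is filled from right to left by the decreasing sequence of barred letters $\bar t, \overline{t-1}, \overline{t-2}, \ldots$. In particular, the letters in row $i$ are precisely $\bar t, \overline{t-1}, \ldots, \overline{t-\eta_i+1}$.

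For the first claim, I would note that among all the rows, row $1$ is the longest (length $\eta_1$, since $\eta$ is a partition). Its entries are $\bar t, \overline{t-1}, \ldots, \overline{t-\eta_1+1}$. Every other row is filled from the same decreasing sequence starting at $\bar t$, so the set of barred letters appearing anywhere in $\tcT$ is contained in $\{\bar t, \overline{t-1}, \ldots, \overline{t-\eta_1+1}\}$, and the first row already realizes each of these letters. This gives the first statement.

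For the second claim, I would count multiplicities. For each $k$ with $1 \leq k \leq \eta_1$, the letter $\overline{t-k+1}$ appears in row $i$ if and only if that row is long enough to reach the $k$-th position from the right, i.e.\ if and only if $\eta_i \geq k$. Hence the multiplicity of $\overline{t-k+1}$ in $\tcT$ equals the number of parts of $\eta$ of size at least $k$, which by definition is $\eta'_k$. Listing the barred letters in reverse order (from $\bar t$ down to $\overline{t-\eta_1+1}$) and recording their multiplicities therefore produces exactly $(\eta'_1, \eta'_2, \ldots, \eta'_{\eta_1}) = \eta'$, proving the second statement.

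I do not anticipate any significant obstacle: the whole content of the corollary is already packaged inside Theorem \ref{determined}, and the only task is to translate the right-to-left row filling into a count of multiplicities via the conjugate partition. The one small point to be careful about is the indexing convention, namely that the $k$-th letter from the right in a row of length $\eta_i$ exists precisely when $\eta_i \geq k$, so that the number of occurrences of $\overline{t-k+1}$ is $\#\{i : \eta_i \geq k\} = \eta'_k$.
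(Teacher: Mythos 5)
Your proposal is correct and matches the paper's intent: the paper offers no written proof, calling the corollary ``an immediate consequence'' of Theorem \ref{determined}, and your argument is exactly the straightforward unpacking of that theorem's conclusion (each row of $\tcT$ filled right-to-left with $\bar t, \overline{t-1},\ldots$), with the multiplicity count $\#\{i : \eta_i \geq k\} = \eta'_k$ correctly yielding the reversed content $\eta'$.
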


In the remainder of this section we prove two  properties of colored Yamanouchi tableaux of shape $\nu$, content $(m^t)$, and total color $d$, in the case when $t= d+w$, where $w \geq 1$.


%

\begin{proposition}  \label{barred-lengtheta} Let $t=d+w$ and assume that $w\geq 1$. Let $T^<$ be a colored Yamanouchi tableaux of shape $\nu$, content $(m^t)$, and total color $d$. Then for every letter $\bar s$ in $T^<$ (and thus in $\tcT$), we have $s\geq \ell(\eta)+w$, where $\eta$ be the shape of $\tcT$.

\end{proposition}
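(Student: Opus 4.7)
The plan is to reduce everything to Corollary \ref{cor-determined} and then to a trivial bound on the shape $\eta$. Corollary \ref{cor-determined} tells us exactly which barred letters appear in $\tcT$: they are $\bar t,\overline{t-1},\ldots,\overline{t-\eta_1+1}$. Since every barred letter of $T^<$ occurs in $\tcT$, the smallest barred subscript that can possibly appear is $s=t-\eta_1+1$. Hence the universal inequality $s\geq \ell(\eta)+w$ is equivalent to the single inequality
\[
t-\eta_1+1\;\geq\;\ell(\eta)+w.
\]

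First I would substitute $t=d+w$ into the displayed inequality to cancel the $w$ on both sides. This reduces the claim to
\[
\eta_1+\ell(\eta)-1\;\leq\; d.
\]
Next I would note that the total color $d$ equals the number of barred letters in $T^<$, and by construction these are exactly the entries of $\tcT$; thus $|\eta|=d$. Finally, I would observe that $\eta_1+\ell(\eta)-1$ counts precisely the boxes lying in the first row or the first column of $\eta$ (the corner box $(1,1)$ being counted once), so this quantity is bounded above by $|\eta|=d$, completing the proof. The role of the hypothesis $w\geq 1$ is only to ensure that the subscript $t-\eta_1+1$ is a positive integer, i.e.\ that the barred alphabet is large enough to accommodate the letters forced by Corollary \ref{cor-determined}; this is automatic once one knows $\eta_1\leq |\eta|=d<t$.

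There is essentially no obstacle: the combinatorial heavy lifting was done in Theorem \ref{determined} and Corollary \ref{cor-determined}, and what remains is the elementary shape inequality $\eta_1+\ell(\eta)-1\leq |\eta|$. I would therefore keep the proof to a few lines, explicitly invoking Corollary \ref{cor-determined} to identify the minimal barred letter and then performing the substitution above.
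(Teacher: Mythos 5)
Your proof is correct and follows essentially the same route as the paper's: both rest entirely on Corollary \ref{cor-determined} together with an elementary counting inequality on the partition $\eta$. The paper argues, for each $\bar s$, that the $t-s$ letters $\bar s,\overline{s+1},\ldots,\overline{t-1}$ all lie among the $d-\ell(\eta)$ barred letters smaller than $\bar t$; specializing its inequality $t-s\le d-\ell(\eta)$ to the minimal letter $s=t-\eta_1+1$ yields exactly your hook inequality $\eta_1+\ell(\eta)-1\le d$, so the two arguments coincide up to packaging.
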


\begin{proof} Let $\bar s \in \tcT$. If $s=t$, then, since $\eta\vdash d$, we have  $s=d+w\geq \ell(\eta)+w$. Suppose now that $s<t$. By Corollary \ref{cor-determined}, the letter ${\bar t}$ appears   in $\tcT$ exactly $\ell(\eta)$ times. Thus, there are exactly $d-\ell(\eta)$ barred letters less than $\bar t$ in $\tcT$. Also, by Corollary \ref{cor-determined}, if $\bar s$ is in $\tcT$, then so are $\overline{s+1}, \overline{s+2}, \ldots, \bar t$. Thus there are at least $t-s$ labels less than $\bar t$ in $\tcT$ and we must have $t-s\leq d-\ell(\eta)$. Thereofore, $s \geq \ell(\eta)+ t-d=\ell(\eta)+w$. 

\end{proof} 

\begin{corollary}\label{cor-barred-lengtheta} Suppose $T^<$ and $\eta$  are as in Proposition \ref{barred-lengtheta}, and let $j=\max\{s \mid \bar s \mbox{\ \  is not in } \tcT\}$. Then $j \geq \ell(\eta)+w-1$. 

\end{corollary}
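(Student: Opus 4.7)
The plan is to apply Proposition \ref{barred-lengtheta} in its contrapositive form. The proposition asserts that every barred letter $\bar s$ occurring in $\tcT$ satisfies $s \geq \ell(\eta)+w$; equivalently, any $s$ with $s<\ell(\eta)+w$ has the property that $\bar s$ does not occur in $\tcT$.

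The key step is then to exhibit a single such $s$ which is as large as possible. The largest integer strictly less than $\ell(\eta)+w$ is $s=\ell(\eta)+w-1$, so by the contrapositive $\overline{\ell(\eta)+w-1}$ is not a letter of $\tcT$. Hence $\ell(\eta)+w-1$ lies in the set $\{s\mid \bar s \text{ is not in } \tcT\}$, and taking the maximum gives $j\geq \ell(\eta)+w-1$, as required.

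There is essentially no obstacle beyond verifying that $\ell(\eta)+w-1\geq 1$, so that the letter $\overline{\ell(\eta)+w-1}$ is meaningful in the alphabet $\mA$. When $\eta$ is nonempty we have $\ell(\eta)\geq 1$ and $w\geq 1$ by hypothesis, so $\ell(\eta)+w-1\geq 1$; if $\eta$ is empty (so $d=0$ and $\tcT$ has no barred letters), we interpret $\ell(\eta)=0$ and the inequality still holds since $w\geq 1$. Thus the corollary is a direct consequence of Proposition \ref{barred-lengtheta}.
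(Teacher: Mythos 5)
Your argument is correct and is exactly the intended one: the paper leaves this corollary unproved as an immediate consequence of Proposition \ref{barred-lengtheta}, and taking the contrapositive at $s=\ell(\eta)+w-1$ is the natural way to make that explicit. The only detail worth adding is that $\ell(\eta)+w-1\leq d+w-1=t-1$ (since $\eta\vdash d$ gives $\ell(\eta)\leq d$), so this value lies in the range $\{1,\ldots,t\}$ of letters permitted by the content $(m^t)$ and hence genuinely belongs to the set over which the maximum $j$ is taken.
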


\section{Stability of the   Kronecker coefficients} \label{sec:stability}

In this section we state and prove our main result, a stability property for the Kronecker coefficients in the particular case when one shape is a hook and the other is a rectangle. Moreover, we give a  bound for the size of the partition starting with which the Kronecker coefficients are stable. We also show that once the stability bound is reached, no new Schur function appear in the decomposition of the Kronecker product. 
 \medskip

We first show that for $n=(d+w)m$, with $w \geq 2$, if $g((m^{d+w}),(n-d,1^d), \nu)>0$, then $\nu$ contains at least $w-1$ rows of length $m$. 

\begin{theorem}\label{rowlength-m} Let $t=d+w$ and assume that $w\geq 2$.  Let $T^<$ be a colored Yamanouchi tableaux of shape $\nu$, content $(m^t)$, and total color $d$. Let $\eta$ be the shape of $\tcT$. Then,  for  $1\leq p\leq w-1$, we have $\ds \nu_{\ell(\eta)+p}=m$. 
\end{theorem}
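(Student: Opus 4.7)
The plan is to analyze $\bT$, which by its construction is a skew semistandard Young tableau whose reverse reading word is a lattice permutation. Writing $L=\ell(\eta)$, the inner shape of $\bT$ is the partition $M$ with $M_i=\eta_i+L$ for $1\le i\le L$ and $M_i=L$ for $L<i\le\ell(\nu)$, while the outer shape is $\Lambda$ with $\Lambda_i=\nu_i+L$ for $1\le i\le\ell(\nu)$ together with additional rows of lengths $\eta'_1,\ldots,\eta'_{\eta_1}$ coming from $(\tcT)'$. In particular, row $L+p$ of $\bT$ has length exactly $\nu_{L+p}$ whenever $L+p\le\ell(\nu)$, so proving $\nu_{L+p}=m$ reduces to pinning down this row length. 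First I would apply Corollary \ref{cor-determined} and Proposition \ref{barred-lengtheta} to observe that each letter $k\in\{1,2,\ldots,L+w-1\}$ appears exactly $m$ times in $U=\cT/\tcT$ with no barred instance, so the content of the ``small letters'' in the unbarred portion is fully fixed, while each entry of $(\tcT)'$ is at least $L+w$.

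I would then proceed by induction on $p$ from $1$ to $w-1$. The base case $p=1$ uses that column $1$ of $U$ begins at row $L+1$ (since $\eta'_1=L$) together with the Yamanouchi condition on the reverse reading word of $\bT$: reading $\bT$ top-to-bottom and right-to-left, every prefix satisfies $n_k\ge n_{k+1}$, and this, combined with column-strictness in $U$ and row-weak-increase, both bounds row $L+1$ from below (giving $\nu_{L+1}\ge m$) and prevents row $L+1$ from being longer than $m$: any extra entry would be forced by column-strictness to take a value large enough that, given the remaining letters still to be placed in rows below and in $(\tcT)'$, the lattice inequality would fail. For the inductive step, assuming $\nu_{L+j}=m$ for $1\le j<p$, the content of rows $1,\ldots,L+p-1$ of $\bT$ is sufficiently constrained that the same lattice-plus-column-strict analysis pins down row $L+p$.

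The main obstacle is the upper bound $\nu_{L+p}\le m$. The cleanest way to extract it uses the family of partitions $\mu^{(k)}$ giving the shapes of entries $\le k$ in $\bT$, with $\mu^{(0)}=M$ and $\mu^{(t)}=\Lambda$, together with the iterated lattice inequality $\mu^{(k)}_{i+1}\le\mu^{(k-1)}_i$; chained down, this gives $\mu^{(L+p)}_{L+p+1}\le M_1=\eta_1+L$, which caps the number of entries of value $\le L+p$ in row $L+p+1$ of $\bT$ by $\eta_1$. A counting argument combining this cap with the content information (in particular, Corollary \ref{cor-barred-lengtheta}, which guarantees that all $m(L+w-1)$ copies of letters $\le L+w-1$ live in rows $1,\ldots,\ell(\nu)$) then forces the exact row length. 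Coordinating these inequalities with the specific skew shape of $\bT$ is where the argument is most delicate.
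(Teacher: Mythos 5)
Your setup is accurate, and your lower bound is essentially the paper's: every copy of a letter $k\le \ell(\eta)+w-1$ is unbarred (Proposition \ref{barred-lengtheta}), the lattice condition forces letter $\ell(\eta)+p$ into rows $\ge \ell(\eta)+p$ of $\bT$, and column-strictness puts its $m$ copies into $m$ distinct columns, giving $\nu_{\ell(\eta)+p}\ge m$. The gap is in your upper bound. The inequality $\mu^{(k)}_{i+1}\le\mu^{(k-1)}_i$ is not the lattice condition: it is the horizontal-strip condition, i.e., it encodes only column-strictness of $\bT$. Writing $L=\ell(\eta)$, chaining it to bound row $L+1$ (whose entries are all $\le L+1$) gives $\nu_{L+1}=\mu^{(L+1)}_{L+1}-M_{L+1}\le \mu^{(1)}_{1}-L\le (M_1+m)-L=m+\eta_1$, and the version you state, $\mu^{(L+p)}_{L+p+1}\le M_1$, bounds only the entries of value $\le L+p$ in row $L+p+1$ by $\eta_1$ while leaving room for up to $m$ further entries equal to $L+p+1$ in that same row. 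In both cases you are off by exactly $\eta_1=M_1-M_{L+1}$, the amount by which the inner shape of $\bT$ recedes between row $1$ and row $L+1$; the positional chain cannot absorb this recession. The global count of the $m(L+w-1)$ small letters does not recover it either, since column-strictness alone permits a letter $k$ lying in a column that meets the barred region to sit as low as row $k+\eta'_c$, so the inequalities you invoke do not pin the small letters into rows $1,\dots,L+p$.

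What closes the argument in the paper is the lattice inequality on cumulative \emph{multiplicities} rather than on rightmost positions. Let $\xi_i$ be the number of $i$'s in the first $L$ rows of $\cT/\tcT$. Reading $\bT$ right-to-left, top-to-bottom, the leftmost $i$ in row $L+1$ is read after all of rows $1,\dots,L$ and before any $i-1$ of row $L+1$, so row $L+1$ contains at most $\xi_{i-1}-\xi_i$ copies of $i$ for $2\le i\le L+1$, exactly $m-\xi_1$ copies of $1$, and no larger letters; the sum telescopes to $m$, giving $\nu_{L+1}\le m$. Your induction on $p$ is then unnecessary: $\nu_{L+p}\le\nu_{L+1}=m$ because $\nu$ is a partition, and the lower bound already handles all $1\le p\le w-1$ uniformly.
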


\begin{proof} Let $j=\max\{s \mid \bar s \mbox{\ \  is not in } \tcT\}$. Then, by Corollary \ref{cor-barred-lengtheta}, $j \geq \ell(\eta)+w -1$. Since $w\geq 2$, we have $j\geq \ell(\eta)+1$. Thus, all $m$ letters $\ell(\eta)+1$ are unbarred in $T^<$. By the lattice permutation condition, letter $\ell(\eta)+1$ can appear in $\cT$ only in row $\ell(\eta)+1$ or below it. By the semistandard condition, letter $\ell(\eta)+1$ must appear in $\cT$ in $m$ different columns. Therefore, the length of row $\ell(\eta)+1$ in $\cT$ must be at least $m$, i.e., $\ds \nu_{\ell(\eta)+1}\geq m$. \medskip

Let $S$ be the tableau consisting of the first $\ell(\eta)$ rows of $\cT/\tcT$ and let $\ds \xi=(\xi_1, \xi_2, \ldots, \xi_{\ell(\eta)})$ be the content of $S$. (Note that some of the last entries in $\xi$ could be $0$.) By the semistandard condition, row $\ell(\eta)+1$ in $\cT$ must contain  the letter $1$ exactly $m-\xi_1$ times. By the lattice permutation condition, for each $i=2, 3, \ldots, \ell(\eta)+1$, row $\ell(\eta)+1$ in $\cT$ contains label $i$ at most $\xi_{i-1}-\xi_i$ times. Recall that $\ds \xi_{\ell(\eta)+1}=0$. No letter larger than $\ell(\eta)+1$ can appear in this row.  Therefore, we have 

$$ \nu_{\ell(\eta)+1}\leq (m-\xi_1)+(\xi_1-\xi_2)+(\xi_2-\xi_3)+\cdots + (\xi_{\ell(\eta)}-\xi_{\ell(\eta)+1})=m.$$ Thus, $\ds \nu_{\ell(\eta)+1}=m$. 
Then, if $p \geq 1$, we must have $\ds \nu_{\ell(\eta)+p}\leq m$. \medskip

Now, since $j \geq \ell(\eta)+w -1$, if $p \leq w-1$, then $j \geq \ell(\eta)+p$. In $\cT$, all $m$ letters $\ell(\eta)+p$ are unbarred. By the lattice permutation condition, letter $\ell(\eta)+p$ can appear in $\cT$ only in row $\ell(\eta)+p$ or below it. By the semistandard condition, letter $\ell(\eta)+p$ must appear in $\cT$ in $m$ different columns. Therefore,  $\ds \nu_{\ell(\eta)+p}\geq m$. 
\end{proof}

Fix the positive integers $m$ and $d$ and suppose that $t=d+w$ with $w\geq 1$. Let $A_{w}^{\prec}$ be the collection of Yamanouchi semistandard colored tableaux $\cT$ (for the \textit{small bar order}) of  content $(m^t)$, and total color $d$ (and any shape). \medskip

\begin{proposition} \label{determined-rows} Let $\cT$ be a tableau in $A_{w}^{\prec}$. Then,   the shape $\eta$ of $\tcT$ and the first $\ell(\eta)$ rows of $\cT$ completely determine the filling of the rows $\ell(\eta)+1, \ell(\eta)+2, \ldots, \ell(\eta)+w-1$. \end{proposition}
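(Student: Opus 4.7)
The plan is to induct on $p = 1, \ldots, w-1$, showing at each step that row $\ell(\eta) + p$ of $\cT$ is determined by the content of the rows above. The key observation is that the counting argument used in the proof of Theorem \ref{rowlength-m}, which establishes $\nu_{\ell(\eta)+1} = m$ through a chain of inequalities, applies essentially verbatim to row $\ell(\eta)+p$ once the preceding rows are fixed. Since Theorem \ref{rowlength-m} also gives $\nu_{\ell(\eta)+p} = m$ for all $p \leq w-1$, every inequality in that chain must be an equality, which pins down the exact multiplicity of each letter in row $\ell(\eta)+p$; the semistandard row condition then forces the row itself. Note that $\tcT$ is already determined by $\eta$ via Corollary \ref{cor-determined}, so only the unbarred rows below $\tcT$ need to be recovered.

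Concretely, assume inductively that rows $\ell(\eta)+1, \ldots, \ell(\eta)+p-1$ of $\cT$ are determined (vacuous when $p=1$), and let $\xi^{(p)}_i$ denote the number of unbarred letters equal to $i$ in the first $\ell(\eta)+p-1$ rows of $\cT$, which is then known. Rows $\ell(\eta)+1$ onward lie entirely in $\cT/\tcT$, so row $\ell(\eta)+p$ contains only unbarred letters. Since $p \leq w-1$, Corollary \ref{cor-barred-lengtheta} ensures that all $m$ copies of letter $\ell(\eta)+p$ are unbarred, and exactly as in Theorem \ref{rowlength-m} these yield $\nu_{\ell(\eta)+p} \geq m$. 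On the other hand, repeating the telescoping argument of that theorem: the semistandard column condition forces the remaining $m - \xi^{(p)}_1$ copies of letter $1$ to land in row $\ell(\eta)+p$; the lattice permutation condition on $\bT$ bounds the multiplicity of letter $i$ in this row by $\xi^{(p)}_{i-1} - \xi^{(p)}_i$ for $2 \leq i \leq \ell(\eta)+p$; and no letter larger than $\ell(\eta)+p$ can appear. Summing gives $\nu_{\ell(\eta)+p} \leq m$, forcing equality throughout. Hence each letter's multiplicity in row $\ell(\eta)+p$ is fixed, and the semistandard row condition determines the row uniquely.

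The main obstacle I expect is verifying that the inductive step's counting argument is genuinely analogous to the one in Theorem \ref{rowlength-m}: that the lattice permutation condition on $\bT$, a global constraint on the reverse reading word, still translates to the same per-letter upper bound $\xi^{(p)}_{i-1} - \xi^{(p)}_i$ in row $\ell(\eta)+p$ for each $p$, not only for $p=1$. This requires showing that the previously determined rows $\ell(\eta)+1, \ldots, \ell(\eta)+p-1$ contribute the correct prefix to the reverse reading word of $\bT$ so that the telescoping can proceed, and that letters strictly larger than $\ell(\eta)+p$ remain forbidden in row $\ell(\eta)+p$ under the inductive hypothesis. Once this bookkeeping is carried out carefully, the proof reduces to a direct reapplication of Theorem \ref{rowlength-m}'s technique.
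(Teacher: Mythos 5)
Your proposal is correct and takes essentially the same approach as the paper: both determine the content of each row $\ell(\eta)+p$ by forcing equality in the telescoping bound from the proof of Theorem \ref{rowlength-m} and then let the semistandard row condition fix the arrangement. The paper simply states the $p\geq 2$ case as ``each box is the letter directly above it plus one,'' which is exactly what your inductive recursion with the counts $\xi^{(p)}_i$ produces.
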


\begin{proof} As before, let $S$ be the tableau consisting of the first $\ell(\eta)$ rows of $\cT/\tcT$ and denote by  $\ds \xi=(\xi_1, \xi_2, \ldots, \xi_{\ell(\eta)})$ be the content of $S$. Let $\xi_0=m$. By the proof of Theorem \ref{rowlength-m}, for each $1\leq i \leq \ell(\eta)+1$,  the letter $i$ appears in row $\ell(\eta)+1$ of $\cT$ exactly $\xi_{i-1}-\xi_i$ times; and these are precisely the letters in row $\ell(\eta)+1$. By the same argument, for $ 2\leq k \leq w-1$, each box in  row $\ell(\eta)+k$,  is filled with the letter obtained by adding $1$ to the letter directly above it. 

\end{proof}

Suppose $w\geq 1$. We define a map $\varphi: A_w^{\prec} \to A_{w+1}^{\prec}$ as follows. Given $\cT\in A_w^{\prec}$ with the shape of $\tcT$ equal to $\eta$, then $\varphi(\cT)$ is the tableau obtained from $\cT$ by performing the steps below. \medskip

(i) Increase each barred letter of $\tcT$ by $1$. \medskip

(ii) Keep the first $\ell(\eta)+w-1$ rows of $\cT/\tcT$ unchanged. \medskip

(iii) Insert a row of length $m$ after row $\ell(\eta)+w-1$. If $w\geq 2$, each  box of the new row is filled with the letter obtained by adding $1$ to the letter in the corresponding box of row $\ell(\eta)+w-1$. If $w=1$,  the new row is exactly row $\ell(\eta)+1$ described in the proof of Proposition   \ref{determined-rows}. \medskip

(iv) increase each letter in the remaining rows by $1$. \medskip 

Obviously $\varphi(\cT)$ has total color $d$. It is straightforward to check that $\varphi(\cT)$ has content $(m^{t+1})$ and thus it belongs to $A_{w+1}^{\prec}$.

\begin{theorem}\label{small-bar-bijection} If $w \geq 1$, the map $\varphi: A_w^{\prec} \to A_{w+1}^{\prec}$ defined above is a bijection. 

\end{theorem}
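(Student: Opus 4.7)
The plan is to construct an explicit inverse $\psi : A_{w+1}^{\prec} \to A_w^{\prec}$ and then check that $\psi \circ \varphi$ and $\varphi \circ \psi$ are both the identity. Given $\cT' \in A_{w+1}^{\prec}$ with $\tcT'$ of shape $\eta$, I would define $\psi(\cT')$ by (a) decreasing every barred letter in $\tcT'$ by $1$, (b) deleting the row in position $\ell(\eta)+w$ of $\cT'$, and (c) decreasing by $1$ every letter in the rows strictly below the deleted row. Step (a) is legitimate by Proposition \ref{barred-lengtheta} applied with parameter $w+1$, which guarantees every barred label $\bar s$ in $\tcT'$ satisfies $s \geq \ell(\eta)+w+1$, so that $\overline{s-1}$ is a valid label; step (b) is legitimate by Theorem \ref{rowlength-m} applied with parameter $w+1$, which guarantees the row at position $\ell(\eta)+w$ has length exactly $m$.

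Next I would verify that $\psi(\cT') \in A_w^{\prec}$. Total color is preserved because the positions and count of barred letters are unchanged. The content becomes $(m^{d+w})$: the $m$ entries of the deleted row are removed, and the uniform $-1$ shifts to the remaining rows and to the barred labels rebalance the counts to exactly $m$ copies each of $1,\ldots,d+w$. Semistandardness for $\prec$ holds because the barred $-1$ shift preserves row and column conditions inside $\tcT'$; the shift of rows below the seam preserves their internal conditions by uniformity; and column-strictness at the new seam --- between row $\ell(\eta)+w-1$ of $\cT'$ (kept unchanged) and row $\ell(\eta)+w+1$ of $\cT'$ with letters $-1$ (now occupying position $\ell(\eta)+w$ of $\psi(\cT')$) --- follows from applying the column-strict condition in $\cT'$ twice across rows $\ell(\eta)+w-1 < \ell(\eta)+w < \ell(\eta)+w+1$, giving a gap of at least $2$ which absorbs the $-1$ shift. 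A symmetric block of checks shows $\varphi(\cT) \in A_{w+1}^{\prec}$.

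Once well-definedness is secured, the two composition identities are immediate. For $\psi \circ \varphi = \mathrm{id}_{A_w^{\prec}}$, the row that $\varphi$ inserts in position $\ell(\eta)+w$ is precisely the one $\psi$ subsequently deletes, and the $\pm 1$ label shifts cancel, so $\cT$ is recovered. For $\varphi \circ \psi = \mathrm{id}_{A_{w+1}^{\prec}}$, the essential point is Proposition \ref{determined-rows}: in $\cT'$ the row at position $\ell(\eta)+w$ is forced --- it is the unique row determined by the content $\xi$ of the first $\ell(\eta)$ rows of $\cT'/\tcT'$ when $w=1$, and it equals row $\ell(\eta)+w-1$ incremented by $1$ when $w \geq 2$ --- so the row that $\varphi$ reinserts after $\psi$ removes it is exactly the row that was there, giving $\varphi(\psi(\cT')) = \cT'$.

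The main obstacle will be verifying that the Yamanouchi (lattice permutation) condition on $\bT$ is preserved under $\varphi$, equivalently under $\psi$. The reverse reading word of $\bT$ for $\varphi(\cT)$ differs from that of $\bT$ for $\cT$ by inserting the reverse reading word of the new row at the point corresponding to the end of row $\ell(\eta)+w-1$ and then shifting every subsequent letter --- including those contributed by $(\tcT)'$ --- by $+1$. One must show that the inserted segment precisely absorbs the count discrepancies created by the uniform $+1$ shift, so that every prefix continues to satisfy $|\{i\}| \geq |\{i+1\}|$. I expect this to follow from the structure exposed in Theorem \ref{rowlength-m} and Proposition \ref{determined-rows}: for each letter $j$ occurring in the tail, the number of $j$'s in the new row exactly matches the count of $(j-1)$'s in the row that was shifted, balancing the counts so the lattice condition is preserved at every subsequent prefix.
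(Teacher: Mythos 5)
Your proposal is correct and follows essentially the same route as the paper: the paper's proof simply declares the inverse to be the reversal of steps (i)--(iv) and cites Theorem \ref{rowlength-m} to guarantee that the row of length $m$ at position $\ell(\eta)+w$ exists and can be deleted. Your version is in fact more careful than the paper's --- the well-definedness checks and the preservation of the Yamanouchi condition that you flag as the main obstacle are dispatched in the paper only by the unproved remark, made before the theorem, that ``it is straightforward to check that $\varphi(\cT)$ \ldots belongs to $A_{w+1}^{\prec}$.''
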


\begin{proof} The inverse of $\varphi$ is the function defined on colored Yamanouchi semistandard tableaux in $A_{w+1}^{\prec}$ by reversing the steps above. By Theorem \ref{rowlength-m}, reversing step (iii), i.e., deleting a row of length $m$, is always possible. 

\end{proof}

For the rest of the article, if $T^<$ is a colored tableau in the natural order, by  $(T^<)^{\prec}$ we mean the tableau obtained from $T^<$ by converting to the small bar order. Similarly,  if $\cT$ is a colored tableau in the small bar order, by  $(\cT)^{<}$ we mean the tableau obtained from $\cT$ by converting to the natural order.\medskip

 Again, fix positive integers $m$ and $d$ and let $t=d+w$, with $w\geq 1$. We denote by $B_w^<$ the   collection of Yamanouchi semistandard colored tableaux $T^<$ (for the \textit{natural order}) of  content $(m^t)$, and total color $d$ (and any shape). \medskip
 
 We define a map $\psi:B_w^<\to B_{w+1}^<$ as follows. Let $T^< \in B_w^<$. We have $(T^<)^{\prec}\in A_w^{\prec}$. Then, $\psi(T^<)= \varphi((T^<)^{\prec})^<$. Thus, $T^<$ is converted to the small bar order, mapped by $\varphi$ to $A_{w+1}^{\prec}$, and then converted back to the natural order. 
 
 \begin{theorem} \label{natural-bijection} If $w\geq 2$, the map $\psi:B_w^<\to B_{w+1}^<$ defined above is a bijection. 
 
 \end{theorem}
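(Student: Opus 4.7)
The plan is to realize $\psi$ as a composition of three bijections, reducing the claim to the already-established Theorem \ref{small-bar-bijection}. First I would establish that the Jeu-de-Taquin conversion $T^<\mapsto (T^<)^{\prec}$ is a bijection between $B_w^<$ and $A_w^{\prec}$ (for every $w\geq 1$). The Jeu-de-Taquin slides preserve the shape of the tableau, the multiset of letters (hence the content), and the number of barred entries (hence the total color). Moreover, the Yamanouchi condition on $T^<$ is defined through the auxiliary tableau $\bT$, which itself is built from the small-bar-order conversion $\cT=(T^<)^{\prec}$; the same $\bT$, and hence the same reverse reading word, arises whether one starts from $T^<$ or from $\cT$, so the Yamanouchi property is invariant under the conversion. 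The inverse map $\cT\mapsto(\cT)^<$ is the opposite sequence of slides and is a bijection of the same kind.

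Second, by Theorem \ref{small-bar-bijection}, the map $\varphi: A_w^{\prec}\to A_{w+1}^{\prec}$ is a bijection (for $w\geq 1$). Since $\psi$ is by definition the composition $(\cdot)^<\circ\varphi\circ(\cdot)^{\prec}$, it is a composition of three bijections
$$B_w^<\ \longrightarrow\ A_w^{\prec}\ \stackrel{\varphi}{\longrightarrow}\ A_{w+1}^{\prec}\ \longrightarrow\ B_{w+1}^<,$$
and is therefore a bijection $B_w^<\to B_{w+1}^<$. The hypothesis $w\geq 2$ is inherited from its presence in the combinatorial statements (Theorem \ref{rowlength-m} and Proposition \ref{determined-rows}) that justify the construction and invertibility of the middle arrow $\varphi$; in particular, Theorem \ref{rowlength-m} guarantees the row of length $m$ whose deletion implements $\varphi^{-1}$.

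Since the argument reduces to a composition of bijections, there is no substantial obstacle to overcome: all the genuine combinatorial work has already been carried out in the earlier theorems. The only point deserving explicit verification is that Jeu-de-Taquin conversion preserves the Yamanouchi property, and this is essentially immediate from the definitions because the tableau $\bT$ used to test the Yamanouchi condition is, by construction, attached to the small-bar-order version of the tableau and is therefore independent of which order one starts in.
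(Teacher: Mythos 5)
Your composition argument is formally valid for the statement as literally written: the Jeu-de-Taquin conversion between the two orders preserves shape, content, total color, and the Yamanouchi property (which is defined through $\bT$, hence through the small-bar-order tableau), so $\psi=(\cdot)^<\circ\varphi\circ(\cdot)^{\prec}$ is a composition of three bijections and is therefore a bijection $B_w^<\to B_{w+1}^<$. But this observation is essentially content-free, and two warning signs should have stopped you. First, your argument works verbatim for $w\geq 1$, since Theorem \ref{small-bar-bijection} already covers $w=1$; your attempt to attribute the hypothesis $w\geq 2$ to Theorem \ref{rowlength-m} and Proposition \ref{determined-rows} does not hold up, because their role is only to make $\varphi$ well defined and invertible, and that is already packaged into Theorem \ref{small-bar-bijection} for all $w\geq 1$. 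Second, and more importantly, the bald bijection $B_w^<\to B_{w+1}^<$ cannot by itself yield Theorem \ref{main}: Blasiak's rule counts Yamanouchi colored tableaux whose SW corner is \emph{unbarred}, and the sets $B_w^<$ carry no such restriction.

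What the paper's proof actually establishes --- and where all of the work lies --- is that for $\cT\in A_w^{\prec}$ the SW corner of $(\cT)^<$ is barred if and only if the SW corner of $(\varphi(\cT))^<$ is barred, so that $\psi$ restricts to a bijection on the subsets actually counted by Blasiak's rule. This is proved by tracking the Jeu-de-Taquin slides in $\cT$ and $\varphi(\cT)$ in parallel: the slides within the first $\ell(\eta)+w-1$ rows agree in the two tableaux, a barred letter that reaches row $\ell(\eta)+w-1$ performs exactly one extra downward move into the inserted row $R$, and from then on the slides agree again because the entries below $R$ have all been shifted by one. The hypothesis $w\geq 2$ is what guarantees that the block of full-length-$m$ rows between $\tcT$ and the inserted row is nonempty, which is needed for this tracking argument. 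Your proposal contains none of this, so while it verifies the literal bijection claim, it misses the substantive content that the theorem is meant to carry into the proof of Theorem \ref{main}.
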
 

\begin{proof} The rigorous proof of this theorem is  technical. Here we present a sketch of the proof. 
 The main idea is to show that, given a tableau $\cT\in A_w^{\prec}$, the SW corner in $(\cT)^<$ is barred if and only iff the SW corner of $\varphi(\cT)^<$ is barred. Since, by Theorem \ref{small-bar-bijection},  $\varphi$ is a bijection, this proves the statement of the theorem.  \medskip
 
 Let $\cT\in A_w^{\prec}$. As before, $T^b$ is the tableau of barred letters in $\cT$ and we denote its shape by $\eta$. To simplify the discussion, we refer to the first $\ell(\eta)$ rows of $\cT/T^b$ as $T_1$. We refer to  the next $w-1$ rows of $\cT/T^b$ (all of length $m$) as $T_2$, and we refer to the remaining rows as $T_3$. Thus $\varphi(\cT)$ is obtained by increasing each letter in $T^b$ by one, keeping  $T_1$ and $T_2$ unchanged, adding one to each letter of $T_3$, and inserting a row of length $m$ between $T_2$ and $T_3$ whose filling is obtained by adding one to the letter in each box directly above it. We refer to this row as $R$. \medskip
 
When converting both $\cT$ and $\varphi(\cT)$ to the natural order, all Jeu-de-Taquin moves will be essentially the same. We explain this below. \medskip

By the construction of $\varphi(\cT)$, because $\varphi(\cT)^b$ is obtained by adding one to each letter in $T^b$ and $T_1$, $T_2$ remain unchanged, the Jeu-de-Taquin moves that stay in $T_1$ and $T_2$, i.e., in the first $\ell(\eta)+w-1$ rows, are \textit{exactly} the same in $\cT$ and $\varphi(\cT)$. \medskip

Now, suppose a barred letter $\bar b$ has arrived by Jeu-de-Taquin moves in row $\ell(\eta)+w-1$ of $\cT$, i.e., the last row of $T_2$. Then $\overline{b+1}$ has arrived in the same place in $\varphi(\cT)$. We have the following situation. On the left, we are in $\cT$ and the line shows the delimitation between $T_2$ and $T_3$. On the right, we are in $\varphi(\cT)$, the first line is the delimitation between the \textit{old} $T_2$ (up to row $\ell(\eta)+w-1$) and the inserted row $R$. The second line is the delimitation between $R$ and $T_3$. 

\begin{center}

\hspace{-2cm} in $\cT$: \hspace{5cm} in $\varphi(\cT)$:\vspace{-.1cm}

\begin{figure}[ht]
\begin{picture}(150,0)
\put(50,0){$\bar b$}\put(60,0){$a$}
\put(30,-5){\line(1,0){50}}
\put(50,-15){$c$} \put(60,-15){$d$}
\end{picture}
\begin{picture}(200,10)
\put(50,0){$\overline{b+1}$}\put(82,0){$a$}
\put(30,-5){\line(1,0){80}}
\put(55,-15){x}\put(78,-15){$a+1$}
\put(30,-20){\line(1,0){80}}
\put(50,-30){$c+1$} \put(78,-30){$d+1$}
\end{picture}

\end{figure}
\vspace{.4in}

\hspace{-2cm} Fig. 7(a) \hspace{5cm} Fig. 7(b)
\end{center}\vspace{.1cm}

Note that if $\overline{b+1}$ arrived by Jeu-de-Taquin moves directly above $x$, then $x\neq a+1$. (If $x=a+1$, originally there was a letter $a$ directly above $x$. Then $\overline{b+1}$ could not have been moved down from directly above this $a$. If $\overline{b+1}$ arrived to the position in Fig 7(b) by a horizontal  switch, this means that row $\ell(\eta)+w-1$ was filed with the letter $a$ from the beginning until (at least) the letter $a$ depicted in Fig. 7(b). However, in this case, $\overline{b+1}$ could not have been moved down to row $\ell(\eta)+w-1$ from the row above it.) \medskip

The next Jeu-de-Taquin move in $\varphi(\cT)$ is to switch $\overline{b+1}$ and $x$ (this is an additional move to the moves in $\cT$). After this move, we have:\newpage

\begin{center}

\hspace{-2cm} in $\cT$: \hspace{5cm} in $\varphi(\cT)$:

\begin{figure}[ht]
\begin{picture}(150,0)
\put(50,0){$\bar b$}\put(60,0){$a$}
\put(30,-5){\line(1,0){50}}
\put(50,-15){$c$} \put(60,-15){$d$}
\end{picture}
\begin{picture}(200,10)
\put(50,-16){$\overline{b+1}$}\put(82,0){$a$}
\put(30,-5){\line(1,0){80}}
\put(55,0){x}\put(78,-15){$a+1$}
\put(30,-20){\line(1,0){80}}
\put(50,-30){$c+1$} \put(78,-30){$d+1$}
\end{picture}

\end{figure}
\vspace{.4in}

\hspace{-2cm} Fig. 8(a) \hspace{5cm} Fig. 8(b)
\end{center}\vspace{.1cm}

To summarize, in $\varphi(\cT)$, Jeu-de-Taquin moves of barred letters in  row $\ell(\eta)+w-1$ always go \textit{down} to row $R$. \medskip

Since $a<c$ if and only if $a+1<c+1$, the moves of barred letters in row $\ell(\eta)+w-1$ in $\cT$, respectively $R$ in $\varphi(\cT)$, and the moves in $T_3$ are \textit{exactly} the same in $\cT$ and $\varphi(\cT)$.
\medskip

Therefore, in tableaux $(\cT)^<$ and $(\varphi(\cT))^<$ the SW corner is either barred in both or unbarred in both.

\end{proof}

Theorems \ref{small-bar-bijection} and \ref{natural-bijection} together with Blasiak's combinatorial rule lead to our main theorem. We use the notation $\tilde{\nu}^{(m)}$ to mean the partition obtained from $\nu$ by adding a part of length $m$ and rearranging the parts to form a partition. 

\medskip

\begin{theorem}\label{main} Fix  integers $m\geq 1$ and $d\geq 0$. Then, whenever $t\geq d+2$, we have  $$g((m^t),(n-d,1^d),\nu)=g((m^{t+1}),(n-d+m,1^d),\tilde{\nu}^{(m)}),$$ where $n=mt$.  Moreover, if $\ds g((m^{t+1}),(n-d+m,1^d),\gamma)>0$, then there exists $\nu\vdash mt$ such that $\gamma=\tilde{\nu}^{(m)}$. 
\end{theorem}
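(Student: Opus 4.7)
Set $w=t-d$, so the hypothesis $t\geq d+2$ is exactly $w\geq 2$. The plan is to deduce the theorem directly from Blasiak's combinatorial rule applied to the bijection $\psi:B_w^<\to B_{w+1}^<$ of Theorem \ref{natural-bijection}.

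First I would invoke Blasiak's rule to rewrite each Kronecker coefficient as a count of Yamanouchi colored tableaux (for the natural order) with unbarred SW corner: $g((m^t),(n-d,1^d),\nu)$ is the number of such tableaux in $B_w^<$ of shape $\nu$, while $g((m^{t+1}),(n-d+m,1^d),\tilde{\nu}^{(m)})$ is the number of such tableaux in $B_{w+1}^<$ of shape $\tilde{\nu}^{(m)}$. It therefore suffices to show that the bijection $\psi$ of Theorem \ref{natural-bijection} restricts to a bijection between these two sets. The proof of that theorem already shows that $\psi$ preserves the barred/unbarred status of the SW corner, so I only need to track the shape transformation.

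Conversion between the natural order and the small bar order via Jeu-de-Taquin is shape-preserving, hence the shape of $\psi(T^<)$ equals the shape of $\varphi((T^<)^{\prec})$. By construction, $\varphi$ inserts one new row of length $m$ immediately after row $\ell(\eta)+w-1$ of $\cT$, where $\eta$ is the shape of $\tcT$. By Theorem \ref{rowlength-m} applied to $T^<$, we have $\nu_{\ell(\eta)+p}=m$ for $1\leq p\leq w-1$, and because $\nu$ is a partition, $\nu_{\ell(\eta)+w}\leq \nu_{\ell(\eta)+w-1}=m$. Thus the insertion preserves the partition property, and the resulting shape is exactly $\tilde{\nu}^{(m)}$. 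Combined with Blasiak's rule and the preservation of the SW corner, this establishes the claimed equality of Kronecker coefficients.

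For the second assertion, suppose $g((m^{t+1}),(n-d+m,1^d),\gamma)>0$. Blasiak's rule produces some $U^<\in B_{w+1}^<$ of shape $\gamma$ with unbarred SW corner. Applying $\psi^{-1}$ (which exists by Theorem \ref{natural-bijection}) yields a $T^<\in B_w^<$ with unbarred SW corner; letting $\nu$ be the shape of $T^<$, we have $\nu\vdash mt$, and the shape analysis from the previous paragraph forces $\gamma=\tilde{\nu}^{(m)}$. I expect the main technical obstacle of this argument — namely verifying that $\psi$ preserves the SW-corner condition and that $\varphi$ always inserts a valid row — to be entirely handled by Theorems \ref{rowlength-m} and \ref{natural-bijection}; what remains is only the bookkeeping verification of the shape change recorded above.
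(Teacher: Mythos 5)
Your proposal is correct and follows essentially the same route as the paper, which derives Theorem \ref{main} precisely by combining Blasiak's rule with the bijection $\psi$ of Theorem \ref{natural-bijection} (whose proof supplies the preservation of the unbarred SW corner) and the shape bookkeeping from Theorem \ref{rowlength-m}. The paper leaves this combination as a one-line remark, so your write-up simply makes explicit the same argument the authors intend.
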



\section{Final remarks} 

As stated in Theorem, \ref{main}, the stability property for the Kronecker product of a Schur function indexed by a hook partition and one indexed by a rectangular partition proved in this article is much stronger than usual stability properties. Starting with the stability bound, as one increases the size of the partitions by $m$, no new partitions are introduced. Therefore, if $n=m(d+w)$ and $w\geq 2$,  one can completely recover the decomposition of the Kronecker product $$\ds s_{(n-d,1^d)}\ast s_{(m^{d+w})}$$ from the Kronecker product  $$\ds s_{(m(d+2)-d,1^d)}\ast s_{(m^{d+2})}.$$ \medskip

We note that the bound $n=tm$, where $t=d+w$ with $w\geq 2$, starting with which the stability of Kronecker coefficients holds, i.e., $$g((m^t),(n-d,1^d),\nu)=g((m^{t+1}),(n-d+m,1^d),\tilde{\nu}^{(m)}),$$ is nearly sharp. If $w=1$ we believe that the stability still holds and the method of proof should be  similar to the case $w\geq 2$. \medskip

If $n=(d-1)m$, we have verified using Maple that $$g((3^3),(5,1^4),(5,2,1,1))=2$$ while $$g((3^4),(8,1^4), (5,3,2,1,1))=3.$$ Therefore the stability property fails in this case. We are uncertain what happens if $w=0$.

\section{Acknowledgements} 

The second author would like to thank Dr. Dan Kennedy for providing the funds to support his research on this problem during the summer of 2014. 



\end{document}